\newtheorem{theorem}{Theorem}
\newtheorem{lemma}[theorem]{Lemma}
\newtheorem{corollary}[theorem]{Corollary}
\newcommand{\Z}{{\mathbb Z}}
\newcommand{\Q}{{\mathbb Q}}
\newcommand{\SL}{{\rm SL}_2 ({\mathbb Z})}
\title[Eisenstein series identities]{Eisenstein series identities based on partial fraction decomposition}
\author[M.~Hirose, N.~Sato, K.~Tasaka]{Minoru Hirose, Nobuo Sato, Koji Tasaka}
\keywords{Eisenstein series, Partial fraction decomposition}
\subjclass[2000]{Primary~11F11, Secondary~11F67}
\thanks{This work is partially supported by Japan Society for the Promotion of Science, Grant-in-Aid for JSPS Fellows (No. 241440, No. 257323).}
\begin{document}

\maketitle

\begin{abstract}
From the theory of modular forms, there are exactly $[(k-2)/6]$ linear relations among the Eisenstein series $E_k$ and its products $E_{2i}E_{k-2i}\ (2\le i \le [k/4])$. 
We present explicit formulas among these modular forms based on the partial fraction decomposition, and use them to determining a basis of the space of modular forms of weight $k$ on $\SL$.
\end{abstract}

\section{Introduction}

The linear relations among the Eisenstein series $E_k(\tau)$ on $\SL$ and its products $E_{2i}(\tau)E_{k-2i}(\tau) \ (2\le i \le [k/4]) $ has been studied since the time of Liouville (see \cite{S}).
In the present paper, we study these relations by mainly using the partial fraction decomposition.

The main result of the current paper is as follows.
Throughout the paper, $\tau$ is a variable in the upper half-plane and $q=e^{2\pi \sqrt{-1}\tau}$.
We define the Eisenstein series $E_k(\tau)$ by 
\begin{equation}\label{eis}
 E_k(\tau) = \frac{2}{(k-1)!}\left( -\frac{B_k}{2k} +  \sum_{n>0} \sigma_{k-1}(n)q^n\right) \quad (k\ge2:{\rm even}),
\end{equation}
where $B_k$ is the $k$-th Bernoulli number and $\sigma_{k}(n)=\sum_{d|n}d^{k}$.
For convenience, we set $E_k(\tau)=0$ if $k\ge1$ is odd, and let 
\[ P_{r,s}(\tau) = E_r(\tau) E_s(\tau) +\delta_{r,2} \frac{E_s'(\tau)}{s} +\delta_{s,2} \frac{E_r'(\tau)}{r} ,\]
where the differential $'$ means $(2\pi \sqrt{-1})^{-1}d/d\tau$ and $\delta$ is the Kronecker delta.
We note that the function $P_{r,s}(\tau)$ is the product $E_r(\tau)E_s(\tau)$ for $r,s\ge4 \ ({\rm even})$, and becomes a modular form of weight $r+s$ on $\SL$ whenever $r,s\ge2$ are even.

\begin{theorem}\label{1} 
For positive integers $r,s,t\ge1$ with $k=r+s+t-1\ge3$, we have
\begin{equation} \label{eq2} 
\begin{aligned}
0=& \sum_{\substack{i+j=k\\i,j\ge1}}\binom{i-1}{t-1}\binom{j-1}{s-1}(-1)^{i+r} (P_{i,j}(\tau)-(-1)^jE_{i+j}(\tau)) \\
&+ \sum_{\substack{j+h=k\\j,h\ge1}}\binom{j-1}{r-1}\binom{h-1}{t-1}(-1)^{j+s} (P_{j,h}(\tau)-(-1)^hE_{j+h}(\tau)) \\
 & + \sum_{\substack{h+i=k\\h,i\ge1}}\binom{h-1}{s-1}\binom{i-1}{r-1}(-1)^{h+t} (P_{h,i}(\tau)-(-1)^iE_{h+i}(\tau)) .
\end{aligned}
\end{equation}
\end{theorem}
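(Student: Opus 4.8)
The plan is to deduce \eqref{eq2} from an elementary partial fraction identity by specializing to lattice points and summing.

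\emph{Step 1: a rational identity.} I would first prove that for nonzero $a,b,c\in\C$ with $a+b+c=0$ and integers $r,s,t\ge1$, writing $k=r+s+t-1$,
\begin{equation*}
\begin{aligned}
0={}&\sum_{\substack{i+j=k\\i,j\ge1}}\binom{i-1}{t-1}\binom{j-1}{s-1}\frac{(-1)^{i+r}}{a^{i}b^{j}}
+\sum_{\substack{j+h=k\\j,h\ge1}}\binom{j-1}{r-1}\binom{h-1}{t-1}\frac{(-1)^{j+s}}{b^{j}c^{h}}\\
&+\sum_{\substack{h+i=k\\h,i\ge1}}\binom{h-1}{s-1}\binom{i-1}{r-1}\frac{(-1)^{h+t}}{c^{h}a^{i}}.
\end{aligned}
\end{equation*}
The key point is that, using $\sum_{p\ge0}\binom{t+p-1}{p}z^{p}=(1-z)^{-t}$, each of the three sums is a single residue:
\begin{equation*}
\sum_{\substack{i+j=k\\i,j\ge1}}\binom{i-1}{t-1}\binom{j-1}{s-1}\frac{(-1)^{i+r}}{a^{i}b^{j}}=(-1)^{r+t}\,\mathrm{Res}_{w=0}f_{1},\qquad f_{1}=\frac{1}{w^{r}(a+w)^{t}(b-w)^{s}},
\end{equation*}
and the second and third equal $(-1)^{r+s}\mathrm{Res}_{w=0}f_{2}$, $(-1)^{s+t}\mathrm{Res}_{w=0}f_{3}$, where $f_{2},f_{3}$ arise from $f_{1}$ by the cyclic substitution $(a,b,c;r,s,t)\mapsto(b,c,a;s,t,r)$. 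Since the denominator of $f_{1}$ has degree $k+1\ge4$, the sum of all residues of $f_{1}$ is $0$, so $\mathrm{Res}_{w=0}f_{1}=-\mathrm{Res}_{w=-a}f_{1}-\mathrm{Res}_{w=b}f_{1}$; substituting $w\mapsto w-a$, $w\mapsto w+b$ and using $a+b=-c$ yields $\mathrm{Res}_{w=-a}f_{1}=(-1)^{r+s}\mathrm{Res}_{w=0}f_{3}$ and $\mathrm{Res}_{w=b}f_{1}=(-1)^{s+t}\mathrm{Res}_{w=0}f_{2}$, so the three sums cancel. (For $s=t=1$ it is just $\frac1{ab}+\frac1{bc}+\frac1{ca}=\frac{a+b+c}{abc}=0$.) Note that the identity as a whole is invariant under the above cyclic substitution, so it is enough to handle $f_{1}$.

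\emph{Step 2: specialization and summation.} I would then set $a=\lambda_{1},b=\lambda_{2},c=\lambda_{3}$, where $\lambda_{\nu}=m_{\nu}\tau+n_{\nu}$ runs over triples of nonzero elements of $\Lambda=\Z\tau+\Z$ with $\lambda_{1}+\lambda_{2}+\lambda_{3}=0$, and sum the identity of Step~1 over all such triples, using Eisenstein summation ($n_{\nu}$ inside $m_{\nu}$) to make sense of the conditionally convergent pieces. Because $\lambda_{3}$ is absent from the first family, $\sum\lambda_{1}^{-i}\lambda_{2}^{-j}$ over such triples is the bilinear lattice sum over $\lambda_{1},\lambda_{2}\ne0$ with $\lambda_{1}+\lambda_{2}\ne0$; separating the diagonal $\lambda_{1}=-\lambda_{2}$ (contributing $-(-1)^{j}\sum_{\lambda\ne0}\lambda^{-(i+j)}$) and evaluating the remainder through the Lipschitz formula $\sum_{n}(z+n)^{-\ell}=\frac{(-2\pi\sqrt{-1})^{\ell}}{(\ell-1)!}\sum_{d\ge1}d^{\ell-1}q^{dz}$ gives
\begin{equation*}
\sum_{\substack{\lambda_{1}+\lambda_{2}+\lambda_{3}=0\\\lambda_{1},\lambda_{2},\lambda_{3}\ne0}}\frac{1}{\lambda_{1}^{i}\lambda_{2}^{j}}=(-2\pi\sqrt{-1})^{\,i+j}\bigl(P_{i,j}(\tau)-(-1)^{j}E_{i+j}(\tau)\bigr)\qquad(i+j=k),
\end{equation*}
the derivative correction $\delta_{i,2}E_{j}'/j+\delta_{j,2}E_{i}'/i$ in $P_{i,j}$ being exactly what the conditionally convergent weight-$\le2$ contributions produce. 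Feeding this and its two cyclic relabelings into the summed identity of Step~1 (and noting $i+j=j+h=h+i=k$) turns it into $(-2\pi\sqrt{-1})^{k}$ times the right-hand side of \eqref{eq2}, which is therefore $0$.

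\emph{The main obstacle.} Step~1 is purely formal. The real work is the analytic bookkeeping in Step~2: the bilinear sums $\sum\lambda_{1}^{-i}\lambda_{2}^{-j}$ with $i$ or $j$ in $\{1,2\}$ are only conditionally convergent, so one must fix the summation order (and observe that the box on the omitted $\lambda_{3}=-\lambda_{1}-\lambda_{2}$ couples $\lambda_{1}$ and $\lambda_{2}$), verify that breaking the termwise-vanishing sum of the three families into three separate convergent sums is legitimate, and push the Lipschitz computation far enough to see the terms $E_{s}'/s$ emerge --- this is precisely where the quasimodularity of $E_{2}$, and hence the definition of $P_{r,s}$ with its $\delta$-terms, is forced. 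For $r,s,t\ge3$ everything converges absolutely and $P_{i,j}=E_{i}E_{j}$, so that range is routine; the general case follows from the same argument once the conditional convergence is handled.
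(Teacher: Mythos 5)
Your overall architecture --- a three-term rational identity specialized to lattice points and evaluated by Eisenstein summation --- is the same as the paper's. Your Step 1 is correct and is a genuine (and clean) alternative to the paper's proof of its Lemma~\ref{3}: the paper instead applies the operator $(\partial_x-\partial_y)^{r-1}(\partial_y-\partial_z)^{s-1}(\partial_z-\partial_x)^{t-1}$ to $\tfrac1{xy}+\tfrac1{yz}+\tfrac1{zx}=0$, whereas your sum-of-residues argument for $f_1=w^{-r}(a+w)^{-t}(b-w)^{-s}$ reproduces the same identity with less computation.

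The gap is the displayed evaluation in Step 2. To split the termwise-vanishing sum over triples into three separately convergent families, the truncation must be invariant under the cyclic permutation of $(\lambda_1,\lambda_2,\lambda_3)$; this forces a box constraint on $\lambda_3=-\lambda_1-\lambda_2$ as well, and the resulting boundary contribution in the $m$-summation produces corrections supported on $i=1$ or $j=1$ (they come from sums of the type $\sum_m mF_k(m,\tau)=-E_{k-1}'/(k-1)$), \emph{not} the $\delta_{i,2},\delta_{j,2}$ terms appearing in $P_{i,j}$. The correct value is
\[
\sum\frac{1}{\lambda_1^{i}\lambda_2^{j}}=(2\pi\sqrt{-1})^{k}\Bigl(E_iE_j-\tfrac12\delta_{j,1}\tfrac{E_{i-1}'}{i-1}-\tfrac12\delta_{i,1}\tfrac{E_{j-1}'}{j-1}-(-1)^jE_{i+j}\Bigr),
\]
which differs from your claimed $(2\pi\sqrt{-1})^{k}(P_{i,j}-(-1)^jE_{i+j})$ by $(2\pi\sqrt{-1})^{k}\frac{E_{k-2}'}{k-2}\bigl(\delta_{i,2}+\delta_{j,2}+\tfrac12\delta_{i,1}+\tfrac12\delta_{j,1}\bigr)$. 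Concretely, for $(i,j)=(2,2)$ your formula gives $P_{2,2}-E_4=4E_4$, while the Eisenstein-summed lattice sum is $E_2^2-E_4\neq 4E_4$ since $E_2^2=5E_4-E_2'$ is not modular. Hence \eqref{eq2} does \emph{not} hold term by term under your identification; one must still prove that the weighted sum of these $E_{k-2}'$-discrepancies over the three families vanishes, which is a nontrivial binomial-coefficient identity (e.g.\ reducing to $\binom{k-3}{s-1}(-1)^r+\binom{k-3}{r-1}(-1)^s=0$ when $t=2$) that the paper verifies explicitly and your proposal skips by asserting, incorrectly, that the conditionally convergent contributions produce exactly the $\delta$-terms of $P_{i,j}$.
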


The identity \eqref{eq2} gives a lot of $\Q$-linear relations among $E_k(\tau)$ and $P_{2i,k-2i}(\tau) \ (1\le i \le [k/4]) $: for example, by taking $(r,s,t)=(1,2,2), (2,1,2)$ or $(2,2,1)$ in \eqref{eq2}, we have $5E_4(\tau)-P_{2,2}(\tau)=0$ (note that the right-hand side of \eqref{eq2} is obviously 0 if $k$ is odd).
In particular, we see that Theorem~\ref{1} gives all linear relations.
Denote by $M_k$ the space of modular forms of weight $k$ for $\SL$.
\begin{corollary}\label{2} 
For each even integer $k>2$, a basis of the space $M_k$ is given by the set
\[ \{ E_k(\tau)\} \cup \{  E_{2i}(\tau) E_{k-2i}(\tau) \mid i=[(k-2)/6]+2,[(k-2)/6]+3, \ldots, [k/4] \}. \]
\end{corollary}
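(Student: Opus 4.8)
The plan is to deduce Corollary~\ref{2} from Theorem~\ref{1} together with two classical inputs: the dimension formula for $M_k$, and the fact (essentially the first sentence of the abstract) that $M_k$ is spanned by $E_k(\tau)$ and the products $E_{2i}(\tau)E_{k-2i}(\tau)$ with $2\le i\le[k/4]$. It is harmless to adjoin also $P_{2,k-2}(\tau)\in M_k$ to this spanning set, so, writing
\[ g_0:=E_k(\tau),\quad g_1:=P_{2,k-2}(\tau),\quad g_m:=E_{2m}(\tau)E_{k-2m}(\tau)=P_{2m,k-2m}(\tau)\ \ (2\le m\le[k/4]), \]
the set $\{g_0,g_1,g_2,\dots,g_{[k/4]}\}$ spans $M_k$, where $\dim_\C M_k=[k/4]-[(k-2)/6]$. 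The candidate basis $B_k=\{g_0\}\cup\{g_m\mid [(k-2)/6]+2\le m\le[k/4]\}$ has exactly $\dim_\C M_k$ elements, so it will suffice to show that $B_k$ spans $M_k$, i.e.\ that each of $g_1,g_2,\dots,g_{[(k-2)/6]+1}$ lies in the $\Q$-span of $B_k$.

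For this I would build a ``staircase'' of relations out of \eqref{eq2}. For $1\le n\le[(k-2)/6]+1$, specialize \eqref{eq2} to the triple $(r,s,t)=(k-4n+1,\,2n,\,2n)$ (admissible: $r\ge1$ and $r+s+t-1=k$). Since $E_m(\tau)\equiv0$ for odd $m$ and $k$ is even, the surviving products in the three sums of \eqref{eq2} are exactly the $P_{i,j}(\tau)$ with $i,j$ both even, and the binomial factors confine the relevant indices to the windows $i\in[t,k-s]$, $j\in[r,k-t]$, $h\in[s,k-r]$. Inspecting these windows, one finds that for every $n$ except possibly the top value $[(k-2)/6]+1$, all surviving products have the form $P_{a,k-a}(\tau)$ with $\min(a,k-a)\ge2n$, hence lie in $\{g_n,g_{n+1},\dots,g_{[k/4]}\}$, and the coefficient of $g_n$ itself is a nonzero rational; the sole exception is when $k\equiv2\ (\mathrm{mod}\ 6)$, in which case the top relation picks up in addition the term $g_{[(k-2)/6]}$. (For $n=1$ the triple is $(k-3,2,2)$, equivalent to $(2,2,k-3)$ by the invariance of \eqref{eq2} under permuting $r,s,t$; summing the three pieces yields $(k-3)\,g_1=\alpha g_0+\sum_{m\ge2}\beta_m g_m$ with $\alpha,\beta_m\in\Q$.) Granting this triangular shape, I would solve the system from the top down: the relation at level $[(k-2)/6]+1$ exhibits $g_{[(k-2)/6]+1}$ as a $\Q$-combination of $g_0$ and the $g_m$ with $m\ge[(k-2)/6]+2$, all of which lie in $B_k$ (in the exceptional case one pairs this relation with the one at level $[(k-2)/6]$ and solves a $2\times2$ linear system); substituting the result into the relation at level $[(k-2)/6]$ places $g_{[(k-2)/6]}$ in the span of $B_k$; iterating downward through $n=1$ places every $g_n$ $(1\le n\le[(k-2)/6]+1)$ in the span of $B_k$. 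Therefore $B_k$ spans $M_k$, and having cardinality $\dim_\C M_k$ it is a basis.

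The main obstacle is the combinatorial bookkeeping behind the staircase: for each chosen triple one must evaluate the double binomial sums appearing as coefficients in \eqref{eq2}, for which the relevant tool is the Vandermonde-type identity $\sum_{i}\binom{i-1}{t-1}\binom{k-i-1}{s-1}=\binom{k-1}{s+t-1}$ and its variants, and then confirm both that the contributions of the ``sub-diagonal'' products $g_m$ $(m<n)$ cancel and that the $g_n$-coefficient does not vanish. The one place where a single triple does not suffice is the top relation for $k\equiv2\ (\mathrm{mod}\ 6)$ --- there $[(k-2)/6]+1>[(k+1)/6]$, and the constraint $r+s+t=k+1$ forces one of $r,s,t$ below $2([(k-2)/6]+1)$, so the relation cannot be made strictly triangular; one then has to verify that the $2\times2$ matrix formed by the coefficients of $g_{[(k-2)/6]}$ and $g_{[(k-2)/6]+1}$ in the two top relations is nonsingular, which is again a finite binomial-sum computation.
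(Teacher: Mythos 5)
Your overall strategy --- start from the known spanning set $\{E_k\}\cup\{E_{2i}E_{k-2i}\}$, specialize \eqref{eq2} to produce a triangular system eliminating $g_2,\dots,g_{[(k-2)/6]+1}$, and finish with the dimension count --- is exactly the paper's. But your specific specialization fails: the key assertion that ``the coefficient of $g_n$ itself is a nonzero rational'' is false for the triple $(r,s,t)=(k-4n+1,2n,2n)$. Collect the terms of \eqref{eq2} multiplying $P_{2n,k-2n}$ (assume $2n\neq k-2n$): the first sum contributes $-2\binom{k-2n-1}{2n-1}$, since for $(i,j)=(2n,k-2n)$ and $(k-2n,2n)$ the sign is $(-1)^{\mathrm{even}+r}=-1$ because $r=k-4n+1$ is odd; the second and third sums each contribute $+\binom{k-2n-1}{2n-1}\bigl(1+\binom{2n-1}{k-4n}\bigr)$, the sign now being $(-1)^{\mathrm{even}+s}=+1$ because $s=t=2n$ is even. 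The total is $2\binom{k-2n-1}{2n-1}\binom{2n-1}{k-4n}$, which vanishes whenever $k-4n>2n-1$, i.e.\ for every $n\le[(k-2)/6]$ --- precisely the range you need. Concretely, for $k=8$, $n=1$ the triple $(5,2,2)$ yields $-9P_{4,4}+21E_8=0$ with no $P_{2,6}$ term at all (so your claim that it gives $(k-3)g_1=\alpha g_0+\sum\beta_m g_m$ is incorrect), and for $k=12$, $n=2$ the triple $(5,4,4)$ yields the trivial identity $0=0$. The staircase therefore has no usable steps.

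The cancellation is caused by the mixed parities of your triples ($r$ odd, $s=t$ even), which make the three sums contribute with opposite signs to the same product. The paper's proof avoids this by specializing \eqref{eq4} at triples $(r,s,t)=(2i-1,a,b)$ with \emph{all three entries odd} and $\ge 2i-1$ (e.g.\ $(2i-1,2i-1,k-4i+3)$, admissible for $2\le i\le[(k-2)/6]+1$): then every surviving product $E_IE_J$ has $I,J$ even and $\min(I,J)\ge 2i$, each sign $(-1)^{I+r}$, $(-1)^{J+s}$, $(-1)^{h+t}$ equals $-1$, so all product coefficients are $-(\text{sums of nonnegative binomials})$ and cannot cancel; one then checks the coefficient of $E_{2i}E_{k-2i}$ is strictly negative. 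With that replacement your triangular elimination and cardinality count go through, and no separate $2\times 2$ case for $k\equiv 2\pmod 6$ is needed.
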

We note that it is easy to check that the number of the above basis coincides with $\dim M_k$, because, for even $k>0$, we have $\dim M_k=[k/4]-[(k-2)/6]$.

In Section 2, we first show a certain identity which can be regarded as a kind of generalization of the partial fraction decomposition of $x^{-r}y^{-s}$. This identity plays an important role in the proof of Theorem~\ref{1}.
Finally, we prove Corollary~\ref{2}.


\section{Partial fraction decomposition and proofs}

We start with proving the following lemma:
\begin{lemma}\label{3}
Let $x,y,z$ be formal symbols with the relation $x+y+z=0$.
Then, for integers $r,s,t\ge1$, we have
\begin{align}
\label{eq3}0=& \sum_{\substack{i+j=k\\i,j\ge1}}\binom{i-1}{t-1}\binom{j-1}{s-1}(-1)^{i+r} x^{-i}y^{-j} + \sum_{\substack{j+h=k\\j,h\ge1}}\binom{j-1}{r-1}\binom{h-1}{t-1}(-1)^{j+s} y^{-j}z^{-h}\\
\notag & + \sum_{\substack{h+i=k\\h,i\ge1}}\binom{h-1}{s-1}\binom{i-1}{r-1}(-1)^{h+t} z^{-h}x^{-i},
\end{align}
where $k=r+s+t-1$. 
\end{lemma}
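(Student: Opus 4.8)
The plan is to reduce the three-variable identity to the classical two-variable partial fraction decomposition
\[
\frac{1}{x^{r}y^{s}}=\sum_{i=1}^{r}\binom{r+s-1-i}{s-1}\frac{1}{x^{i}(x+y)^{r+s-i}}+\sum_{j=1}^{s}\binom{r+s-1-j}{r-1}\frac{1}{y^{j}(x+y)^{r+s-j}},
\]
which is the standard expansion obtained by writing $1=\bigl((x+y)-y\bigr)^{?}/\cdots$, or equivalently by applying $\partial_x^{r-1}\partial_y^{s-1}$ to $\tfrac{1}{xy}=\tfrac{1}{x+y}(\tfrac1x+\tfrac1y)$. Using $z=-(x+y)$, the term $z^{-h}$ becomes $(-1)^{h}(x+y)^{-h}$, so each of the three sums in \eqref{eq3} can be rewritten purely in terms of $x$, $y$, and $x+y$. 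The claim then becomes a polynomial identity in the two independent symbols $x,y$ (equivalently, an identity of rational functions in $x$ and $x+y$), which is what I would verify.

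First I would fix notation: set $w=x+y$, so $z=-w$, and rewrite each of the three sums. The first sum involves $x^{-i}y^{-j}$ with $i+j=k$; apply the partial fraction formula above with $(r,s)$ there replaced by $(i,j)$ to express $x^{-i}y^{-j}$ as a combination of $x^{-a}w^{-(i+j-a)}$ and $y^{-b}w^{-(i+j-b)}$ — note $i+j=k$ is constant across the sum. The second sum, $y^{-j}z^{-h}=(-1)^{h}y^{-j}w^{-h}$ with $j+h=k$, is already in the target form; likewise the third, $z^{-h}x^{-i}=(-1)^{h}w^{-h}x^{-i}$. After substitution, collect the coefficient of each monomial $x^{-a}w^{-(k-a)}$ (for $1\le a\le k-1$) and of each $y^{-b}w^{-(k-b)}$; by the symmetry $x\leftrightarrow y$, $s\leftrightarrow t$ (which permutes the three cyclic terms appropriately, after also swapping $r$ with itself), it suffices to prove the coefficient of $x^{-a}w^{-(k-a)}$ vanishes for every $a$. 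That coefficient is an alternating sum of products of three binomial coefficients, and the identity to be checked is a Vandermonde–Chu-type convolution.

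The main obstacle I expect is the bookkeeping in this last step: after expanding, the vanishing of the coefficient of $x^{-a}w^{-(k-a)}$ reduces to an identity of the shape
\[
\sum_{i}(-1)^{i}\binom{i-1}{t-1}\binom{k-i-1}{s-1}\binom{k-i-1}{\,i-a\,}\ \pm\ (\text{one more term from the cyclic sums})=0,
\]
and massaging this into a recognizable hypergeometric/Vandermonde form requires care with the index ranges and the sign conventions coming from $z=-w$. I would handle it by introducing generating functions: multiply the putative coefficient identity by $u^{a}$ and sum over $a$, turning each binomial convolution into a product of simple power-series factors $(1-u)^{-\ast}$ and $(1+\cdots)^{\ast}$, so that the three cyclic contributions visibly cancel. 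An alternative, possibly cleaner route is to avoid the coefficient extraction entirely: observe that both sides of \eqref{eq3}, viewed as functions of $x,y$ with $z=-x-y$, are rational functions whose only possible poles are along $x=0$, $y=0$, $x+y=0$; show that the full expression has no pole along any of these lines (the residues cancel pairwise between consecutive cyclic terms, using the partial fraction formula only to locate and match leading Laurent coefficients) and vanishes at infinity, hence is identically zero. This reduces the combinatorics to matching a single leading term rather than all coefficients, which I expect to be the shortest path.
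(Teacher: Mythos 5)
Your proposal is a plan rather than a proof: both routes you sketch terminate exactly at the combinatorial identity that constitutes the entire content of the lemma, and neither route actually establishes it. In the coefficient-extraction route, after substituting $z=-(x+y)$ and expanding $x^{-i}y^{-j}$ by partial fractions, the vanishing of the coefficient of each $x^{-a}(x+y)^{-(k-a)}$ is an alternating triple-binomial convolution that you describe as "a Vandermonde--Chu-type convolution" to be "massaged," but you never exhibit the cancellation; the generating-function idea is stated only as an expectation that "the three cyclic contributions visibly cancel." In the residue route there is in addition a concrete miscalculation: to show the expression has no pole along $x=0$ you must kill the \emph{entire} principal part, i.e.\ the coefficient of $x^{-a}$ for every $1\le a\le k-1$ (the first sum contributes $\binom{a-1}{t-1}\binom{k-a-1}{s-1}(-1)^{a+r}y^{-(k-a)}$ to the coefficient of $x^{-a}$, while the third sum contributes an infinite tail from expanding $z^{-h}=(-1)^h(x+y)^{-h}$ around $x=0$), so this is again a one-parameter family of binomial identities of the same difficulty as direct coefficient matching --- not "a single leading term." The skeleton (reduce to two independent variables, locate poles, cancel) is sound, but the step where all the work lives is missing.

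For comparison, the paper sidesteps the binomial combinatorics entirely: it applies the differential operator
\[
D_{r,s,t}=\Bigl(\tfrac{\partial}{\partial x}-\tfrac{\partial}{\partial y}\Bigr)^{r-1}\Bigl(\tfrac{\partial}{\partial y}-\tfrac{\partial}{\partial z}\Bigr)^{s-1}\Bigl(\tfrac{\partial}{\partial z}-\tfrac{\partial}{\partial x}\Bigr)^{t-1}
\]
to the trivially true seed identity $x^{-1}y^{-1}+y^{-1}z^{-1}+z^{-1}x^{-1}=0$ (a consequence of $x+y+z=0$), and checks by a direct Leibniz computation that $D_{r,s,t}(x^{-1}y^{-1})$ equals $(r-1)!(s-1)!(t-1)!$ times the first sum in \eqref{eq3}, and cyclically for the other two. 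The cancellation is then inherited from the seed identity rather than proved coefficient by coefficient. If you want to salvage your approach, you would need to either carry out the generating-function computation in full, or find a similar mechanism that makes the three cyclic contributions cancel for structural reasons.
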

\begin{proof}
For integers $r,s,t\ge1$ we define a differential operator $D_{r,s,t}$ by 
\[ D_{r,s,t} = \left( \frac{\partial}{\partial x} -\frac{\partial}{\partial y}\right)^{r-1}\cdot \left( \frac{\partial}{\partial y} -\frac{\partial}{\partial z} \right)^{s-1}\cdot \left( \frac{\partial}{\partial z} - \frac{\partial}{\partial x} \right)^{t-1} .\]
One computes
\begin{align*}
D_{r,s,t}\left(\frac{1}{xy} \right)&= (-1)^{t-1} \left( \frac{\partial}{\partial x} -\frac{\partial}{\partial y}\right)^{r-1}\cdot  \left(\frac{\partial}{\partial y} \right)^{s-1} \cdot \left( \frac{\partial}{\partial x}\right)^{t-1}  \frac{1}{xy} \\
&=(-1)^{t-1} \sum_{i=0}^{r-1}(-1)^{i}\binom{r-1}{i} \left\{ \left( \frac{\partial}{\partial x}\right)^{t-1+r-1-i}  \frac{1}{x} \right\} \left\{ \left( \frac{\partial}{\partial y}\right)^{s-1+i}  \frac{1}{y} \right\} \\
&=(-1)^{r+s} \sum_{i=0}^{r-1} (-1)^i \frac{(r-1)!}{i!(r-1-i)!} \frac{ (t+r-i-2)! }{x^{t+r-1-i}} \frac{  (s+i-1)!}{y^{s+i}} \\
&= (-1)^{r+s}(r-1)!(s-1)!(t-1)! \sum_{i=0}^{r-1} \binom{t+r-i-2}{t-1}\binom{s+i-1}{i}\frac{(-1)^i}{x^{t+r-1-i}y^{i+s}}\\
(s+i\mapsto J)&=(r-1)!(s-1)!(t-1)! \sum_{\substack{I+J=k\\I,J\ge1}}\binom{I-1}{t-1}\binom{J-1}{s-1}(-1)^{I+r} \frac{1}{x^I y^J} .
\end{align*}
In the same manner, we can obtain 
\begin{align*}
D_{r,s,t}\big(y^{-1} z^{-1}\big)&=(r-1)!(s-1)!(t-1)!\sum_{\substack{j+h=k\\j,h\ge1}}\binom{j-1}{r-1}\binom{h-1}{t-1}(-1)^{j+s} y^{-j}z^{-h},\\
D_{r,s,t}\big(z^{-1} x^{-1}\big)&= (r-1)!(s-1)!(t-1)!\sum_{\substack{h+i=k\\h,i\ge1}}\binom{h-1}{s-1}\binom{i-1}{r-1}(-1)^{h+t} z^{-h}x^{-i}.
\end{align*}
Since $x^{-1}y^{-1}+y^{-1}z^{-1}+z^{-1}x^{-1}=0$, one has
\[0= D_{r,s,t} \big(x^{-1}y^{-1}+y^{-1}z^{-1}+z^{-1}x^{-1} \big) =(r-1)!(s-1)!(t-1)! \times (\mbox{R.H.S. of \eqref{eq3}}),\]
which completes the proof of \eqref{eq3}.\qed
\end{proof}

\noindent
{\it Remark}.
Putting $t=1$ in \eqref{eq3}, one easily obtains
\begin{align}
\notag0=& \sum_{\substack{i+j=r+s\\i,j\ge1}}\binom{j-1}{s-1}(-1)^{i+r} x^{-i}(-x-z)^{-j} + \sum_{\substack{j+h=r+s\\j,h\ge1}}\binom{j-1}{r-1}(-1)^{j+s} (-x-z)^{-j}z^{-h}\\
\notag & + \sum_{\substack{h+i=r+s\\h,i\ge1}}\binom{h-1}{s-1}\binom{i-1}{r-1}(-1)^{h+1} z^{-h}x^{-i}\\
\label{pfd}=&(-1)^s\Bigg[ \sum_{\substack{i+j=r+s\\i,j\ge1}}\left( \binom{i-1}{s-1} (x+z)^{-i}x^{-j} +\binom{i-1}{r-1} (x+z)^{-i}z^{-j}\right)-x^{-r} z^{-s}\Bigg],
\end{align}
which corresponds to the partial fraction decomposition of $x^{-r}z^{-s}$ (see \cite[eq.(19)]{GKZ}).

\noindent
{\bf Proof of Theorem \ref{1}}.
For $k\ge1$, using the Lipschitz formula, one can show that the Eisenstein series defined in \eqref{eis} coincides with the following limit: 
\[ E_k(\tau)=\lim_{M\rightarrow \infty} \sum_{m=-M}^{M} F_k(m,\tau),\]
where the function $F_k(m,\tau)$ is a holomorphic function on the upper half-plane defined by
\[ F_k(m,\tau)=\lim_{N\rightarrow \infty} \big( 2\pi \sqrt{-1} \big)^{-k}\sum_{\substack{n=-N\\(m,n)\neq(0,0)}}^{N}(m\tau+n)^{-k}.\]
We put the set $S_{M,N}=\{(m_1,m_2,n_1,n_2)\in \Z^4\mid -M\le m_1,m_2,m_1+m_2\le M , -N\le n_1,n_2,n_1+n_2\le N, (m_1,n_1)\neq(0,0), (m_2,n_2)\neq(0,0),(m_1+m_2,n_1+n_2)\neq(0,0)\}$, and define 
\[ G_{k_1,k_2,M,N}(\tau)=\big( 2\pi \sqrt{-1}\big)^{-k_1-k_2} \sum_{(m_1,m_2,n_1,n_2)\in S_{M,N}} (m_1\tau+n_1)^{-k_1} (m_2\tau+n_2)^{-k_2} .\] 
Then, letting $x=m_1\tau+n_1$ and $y=m_2\tau+n_2$ in \eqref{eq3} and summing up all elements in $S_{M,N}$, we have
\begin{equation*}
\begin{aligned}
0&= \sum_{\substack{i+j=k\\i,j\ge1}}\binom{i-1}{t-1}\binom{j-1}{s-1}(-1)^{i+r} \sum_{(m_1,m_2,n_1,n_2)\in S_{M,N}} (m_1\tau+n_1)^{-i}(m_2\tau+n_2)^{-j} \\
&+ \sum_{\substack{j+h=k\\j,h\ge1}}\binom{j-1}{r-1}\binom{h-1}{t-1}(-1)^{j+s} \sum_{(m_1,m_2,n_1,n_2)\in S_{M,N}}  (m_2\tau+n_2)^{-j}(-(m_1+m_2)\tau-(n_1+n_2))^{-h}\\
 & + \sum_{\substack{h+i=k\\h,i\ge1}}\binom{h-1}{s-1}\binom{i-1}{r-1}(-1)^{h+t}\sum_{(m_1,m_2,n_1,n_2)\in S_{M,N}}  (-(m_1+m_2)\tau-(n_1+n_2))^{-h}(m_1\tau+n_1)^{-i}\\
&=\big(2\pi \sqrt{-1} \big)^{k} \Bigg[ \sum_{\substack{i+j=k\\i,j\ge1}}\binom{i-1}{t-1}\binom{j-1}{s-1}(-1)^{i+r} G_{i,j,M,N}(\tau) \\
&+  \sum_{\substack{j+h=k\\j,h\ge1}}\binom{j-1}{r-1}\binom{h-1}{t-1}(-1)^{j+s} G_{j,h,M,N}(\tau) +\sum_{\substack{h+i=k\\h,i\ge1}}\binom{h-1}{s-1}\binom{i-1}{r-1}(-1)^{h+t} G_{h,i,M,N}(\tau) \Bigg],
\end{aligned}
\end{equation*}
because $(m_2,-m_1-m_2,n_2,-n_1-n_2)$ (resp. $(-m_1-m_2,m_1,-n_1-n_2,n_1)$) is in the set $S_{M,N}$ if and only if $(m_1,m_2,n_1,n_2)\in S_{M,N}$.
We note that when $r,s,t\ge3$, since for $k_1,k_2>2$ one has 
\begin{equation*}
\lim_{M\rightarrow \infty}\lim_{N\rightarrow \infty} G_{k_1,k_2,M,N}(\tau)=E_{k_1}(\tau)E_{k_2}(\tau)-(-1)^{k_2}E_{k_1+k_2}(\tau),
\end{equation*}
we can obtain
\begin{equation} \label{eq4} 
\begin{aligned}
0=& \sum_{\substack{i+j=k\\i,j\ge3}}\binom{i-1}{t-1}\binom{j-1}{s-1}(-1)^{i+r} (E_i(\tau)E_j(\tau)-(-1)^jE_{i+j}(\tau)) \\
&+ \sum_{\substack{j+h=k\\j,h\ge3}}\binom{j-1}{r-1}\binom{h-1}{t-1}(-1)^{j+s} (E_j(\tau)E_h(\tau)-(-1)^hE_{j+h}(\tau)) \\
 & + \sum_{\substack{h+i=k\\h,i\ge3}}\binom{h-1}{s-1}\binom{i-1}{r-1}(-1)^{h+t} (E_h(\tau)E_i(\tau)-(-1)^iE_{h+i}(\tau)) .
\end{aligned}
\end{equation}

To prove \eqref{eq2} for $r,s,t\ge1$, we begin by showing 
\begin{align}\label{test}
\lim_{M\rightarrow \infty} \lim_{N\rightarrow \infty} G_{k_1,k_2,M,N}(\tau) & =  E_{k_1}(\tau) E_{k_2}(\tau)-\frac{1}{2} \delta_{k_2,1} \frac{E_{k_1-1}'(\tau)}{k_1-1} - \frac{1}{2} \delta_{k_1,1} \frac{E_{k_2-1}'(\tau)}{k_2-1}-(-1)^{k_2}E_{k_1+k_2}(\tau),
\end{align}
for integers $k_1,k_2\ge1$ with $k_1+k_2\ge3$.
One computes
\begin{align*}
&\sum_{(m_1,m_2,n_1,n_2)\in S_{M,N}} \frac{1}{(m_1\tau+n_1)^{k_1} (m_2\tau+n_2)^{k_2}} \\
=& \sum_{-M\le m_1,m_2,m_1+m_2\le M} \sum_{\substack{-N\le n_1,n_2,n_1+n_2\le N\\ (m_1,n_1)\neq(0,0),(m_2,n_2)\neq(0,0)\\ (m_1+m_2,n_1+n_2)\neq(0,0)}} \frac{1}{(m_1\tau+n_1)^{k_1} (m_2\tau+n_2)^{k_2}}\\
=&  \sum_{-M\le m_1,m_2,m_1+m_2\le M}\Bigg(  \sum_{\substack{-N\le n_1,n_2,n_1+n_2\le N\\ (m_1,n_1)\neq(0,0)\\(m_2,n_2)\neq(0,0)}} -\sum_{\substack{-N\le n_1,n_2,n_1+n_2\le N\\ (m_1,n_1)\neq(0,0)\\(m_2,n_2)\neq(0,0)\\ (m_1+m_2,n_1+n_2)=(0,0)}}   \Bigg)\frac{1}{(m_1\tau+n_1)^{k_1} (m_2\tau+n_2)^{k_2}}\\
=& \sum_{-M\le m_1,m_2,m_1+m_2\le M} \Bigg( \sum_{\substack{-N\le n_1,n_2\le N\\ (m_1,n_1)\neq(0,0)\\(m_2,n_2)\neq(0,0)}} - \sum_{\substack{-N\le n_1,n_2\le N\\ n_1+n_2\not\in [ -N,N]\\ (m_1,n_1)\neq(0,0)\\(m_2,n_2)\neq(0,0)}} \Bigg)  \frac{1}{(m_1\tau+n_1)^{k_1}(m_2\tau+n_2)^{k_2}}\\
&-\sum_{\substack{-M\le m\le M\\ -N\le n\le N\\ (m,n)\neq(0,0)}} \frac{(-1)^{k_2}}{(m\tau+n)^{k_1+k_2}}.
\end{align*}
Since for integers $k_1,k_2\ge1$ with $k_1+k_2\ge3$ we have
\[ \lim_{N\rightarrow \infty} \sum_{\substack{-N\le n_1,n_2\le N\\ n_1+n_2\not\in [ -N,N]\\ (m_1,n_1)\neq(0,0)\\(m_2,n_2)\neq(0,0)}}  \frac{1}{(m_1\tau+n_1)^{k_1}(m_2\tau+n_2)^{k_2}} = 0,\]
we obtain
\begin{align}
\notag \lim_{N\rightarrow \infty} G_{k_1,k_2,M,N}(\tau)&=\sum_{-M\le m_1,m_2,m_1+m_2\le M} F_{k_1}(m_1,\tau) F_{k_2}(m_2,\tau) - (-1)^{k_1} \sum_{-M\le m \le M} F_{k_1+k_2} (m,\tau)\\
\label{eqeq1} &= \sum_{-M\le m_1,m_2\le M} F_{k_1}(m_1,\tau)F_{k_2}(m_2,\tau) -\sum_{\substack{-M\le m_1,m_2\le M\\ m_1+m_2\not\in [-M,M]}} F_{k_1}(m_1,\tau) F_{k_2,}(m_2,\tau) \\
\notag &- (-1)^{k_2} \sum_{-M\le m \le M} F_{k_1+k_2} (m,\tau).
 \end{align}
Noting that the function $F_k(m,\tau)+\frac{{\rm sgn}(m)}{2} \delta_{k,1}$ is a rapidly decreasing function of $m$, we have
\[\lim_{M\rightarrow \infty} \sum_{\substack{-M\le m_1,m_2\le M\\ m_1+m_2\not\in [-M,M]}} \Big(F_{k_1}(m_1,\tau)+\frac{{\rm sgn}(m_1)}{2} \delta_{k_1,1}\Big)\Big(F_{k_2}(m_2,\tau) +\frac{{\rm sgn}(m_2)}{2}\delta_{k_2,1}\Big)=0,\]
which gives for integers $k_1,k_2\ge1$ with $k_1+k_2\ge3$
\begin{align*}
&\lim_{M\rightarrow \infty} \sum_{\substack{-M\le m_1,m_2\le M\\ m_1+m_2\not\in [-M,M]}} F_{k_1}(m_1,\tau)F_{k_2}(m_2,\tau) \\
&= -\frac{1}{2} \delta_{k_2,1} \lim_{M\rightarrow \infty} \sum_{-M\le m\le M}m F_{k_1}(m,\tau)-\frac{1}{2} \delta_{k_1,1} \lim_{M\rightarrow \infty} \sum_{-M\le m\le M}m F_{k_2}(m,\tau)\\
&= \frac{1}{2} \delta_{k_2,1} \frac{E_{k_1-1}'(\tau)}{k_1-1} + \frac{1}{2} \delta_{k_1,1} \frac{E_{k_2-1}'(\tau)}{k_2-1}.
\end{align*}
Combining this with $\lim_{M\rightarrow\infty}$\eqref{eqeq1}, we obtain \eqref{test}.
For the proof of \eqref{eq2}, it suffices to check that
\begin{align*}
0=&\sum_{\substack{i+j=k\\i,j\ge1}}\binom{i-1}{t-1}\binom{j-1}{s-1}(-1)^{i+r}\Big(P_{i,j}(\tau)-(-1)^{j}E_{i+j}(\tau)- \lim_{M\rightarrow \infty} \lim_{N\rightarrow \infty} G_{i,j,M,N}(\tau)\Big) \\
&+\sum_{\substack{j+h=k\\j,h\ge1}}\binom{j-1}{r-1}\binom{h-1}{t-1}(-1)^{j+s} \Big(P_{j,h}(\tau)-(-1)^{h}E_{j+h}(\tau)- \lim_{M\rightarrow \infty} \lim_{N\rightarrow \infty} G_{j,h,M,N}(\tau)\Big)\\
&+\sum_{\substack{h+i=k\\h,i\ge1}}\binom{h-1}{s-1}\binom{i-1}{r-1}(-1)^{h+t} \Big(P_{h,i}(\tau)-(-1)^{i}E_{h+i}(\tau)- \lim_{M\rightarrow \infty} \lim_{N\rightarrow \infty} G_{h,i,M,N}(\tau)\Big)
\end{align*}
for even $k>0$.
From \eqref{test}, the right-hand side of the above can be written in the form
\begin{align*}
& \sum_{\substack{i+j=k\\i,j\ge1}}\binom{i-1}{t-1}\binom{j-1}{s-1}(-1)^{i+r} \frac{E_{k-2}'(\tau)}{k-2} \left( \delta_{j,2}+\delta_{i,2}+\frac{1}{2} \delta_{j,1}+\frac{1}{2} \delta_{i,1} \right) \\
&+ \sum_{\substack{j+h=k\\j,h\ge1}}\binom{j-1}{r-1}\binom{h-1}{t-1}(-1)^{j+s} \frac{E_{k-2}'(\tau)}{k-2} \left( \delta_{h,2}+\delta_{j,2}+\frac{1}{2} \delta_{h,1}+\frac{1}{2} \delta_{j,1} \right) \\
 & + \sum_{\substack{h+i=k\\h,i\ge1}}\binom{h-1}{s-1}\binom{i-1}{r-1}(-1)^{h+t}\frac{E_{k-2}'(\tau)}{k-2} \left( \delta_{i,2}+\delta_{h,2}+\frac{1}{2} \delta_{i,1}+\frac{1}{2} \delta_{h,1} \right).
\end{align*}
It can be easily shown that the above is 0: for example, when $t=2$ and $r,s\ge3$, the above equation is $0$ because of 
\begin{align*}
\frac{E_{k-2}'(\tau)}{k-2}\left\{\binom{k-3}{s-1}(-1)^r+\binom{k-3}{r-1}(-1)^{s}\right\}=0,
\end{align*}
and also, when $t=1$ and $r,s\ge3$, the above equation can be reduced to 
\begin{align*}
\frac{E_{k-2}'(\tau)}{k-2}\left\{ \binom{k-3}{s-1}(-1)^r-\frac12\binom{k-2}{s-1}(-1)^{r} + \binom{k-3}{r-1} (-1)^{s}-\frac12 \binom{k-2}{r-1}(-1)^{s} \right\},
\end{align*}
which is 0.
We complete the proof.
\qed

\

\noindent
{\bf Proof of Corollary \ref{2}}.
Let $d_k=\dim M_k=[k/4]-[(k-2)/6]$.
The space $M_k$ is generated by the set $\{ E_k,E_{2i}E_{k-2i}\mid 2\le i \le  [k/4]\}$ (see \cite{F,GKZ}).
Therefore, it is enough to show that each $E_{2i}E_{k-2i}\ (2\le i \le  [k/4]-d_{k}+1)$ can be expressed as sums of $E_{k}$ and $E_{2j}E_{k-2j} \ (i+1\le j \le  [k/4])$.
For each $i \ (2\le i \le [k/4]-d_{k}+1)$, we take $(r,s,t)=(2i-1,a,b)$ with odd integers $a,b\ge2i-1$ such that $a+b=k-2i+2$ (for example, we can choose $(2i-1,2i-1,k-4i+3)$ for any $i \in \{2,3,\ldots,[(k-2)/6]+1\}$).
Substituting this into \eqref{eq4}, we find that the coefficients of $E_{2p}E_{k-2p}\ (2\le p\le i-1)$ in \eqref{eq4} are 0 and the coefficient of $E_{2i}E_{k-2i}$ in \eqref{eq4} is a negative integer since $(2i-1,a,b)\equiv (1,1,1) \pmod{2}$, which gives our demanded relation.
\qed

\

\noindent
 {\it Remark}. 
(i) Corollary~\ref{2} is similar to the result obtained by Fukuhara~\cite[Theorem~1,1]{F}, but not the same.\\
(ii) Specializing $t=1$ in \eqref{eq2}, for $r+s\ge4$ (even) we obtain
\begin{equation}\label{eq1}
0 =  \sum_{\substack{i+j=r+s\\i,j\ge1}} \left(\binom{i-1}{r-1}+\binom{i-1}{s-1} \right) (P_{i,j}(\tau) -  E_{i+j}(\tau)) -P_{r,s}(\tau)+(-1)^sE_{r+s}(\tau) . 
\end{equation}
The identity \eqref{eq1} was already shown by Popa \cite[(A.3)]{P}, which was first indicated by Zagier \cite[\S 8]{Z} (Zagier also gave a direction of the proof of \eqref{eq1} based on partial fraction decomposition \eqref{pfd} (see \cite{Z1})).
Popa pointed out that the right-hand side of \eqref{eq1} except $ E_k(\tau)$ is orthogonal to all Hecke eigen cusp forms of weight $k$ on $\SL$ under the Petersson product.
This means the right-hand side of \eqref{eq1} except $E_k(\tau)$ has to be $c E_k(\tau)$ for some constant $c \in \Q$, and he determined the constant $c=\binom{k}{r} - (-1)^{s}$ using an interesting Bernoulli numbers identities.
(The constant $c$ is exactly equal to our coefficient of $E_k$ since $\sum_{r+s=k} \sum_{i+j=k}(\binom{i-1}{r-1}+\binom{i-1}{s-1} ) x^{r-1}y^{s-1}=((x+y)^k-x^k-y^k)/xy = \sum_{r+s=k} \binom{k}{r} x^{r-1}y^{s-1}$, where the variables $r,s,i,j$ run over $\Z_{>0}$.)





\

\small{
\noindent
\address{Graduate~School~of~Mathematics, Kitashirakawa Oiwake-cho, Sakyo-ku, Kyoto, 606-8502, Japan}\\
Minoru Hirose\\
\email{hirose@math.kyoto-u.ac.jp }\\
\address{Graduate~School~of~Mathematics, Kitashirakawa Oiwake-cho, Sakyo-ku, Kyoto, 606-8502, Japan}\\
Nobuo Sato\\
\email{sato@math.kyoto-u.ac.jp }\\
\address{Graduate~School~of~Mathematics, 744, Motooka, Nishi-ku, Fukuoka, 819-0395, Japan}\\
Koji Tasaka\\
\email{k-tasaka@math.kyushu-u.ac.jp }
}
\noindent
         
         
              
              

\end{document}